\def\simge{\underset\sim>}
\def\T{\text}
\def\1#1{\overline{#1}}
\def\2#1{\widetilde{#1}}
\def\3#1{\widehat{#1}}
\def\4#1{\mathbb{#1}}
\def\5#1{\frak{#1}}
\def\6#1{{\mathcal{#1}}}
\def\C{{\4C}}
\def\R{{\4R}}
\def\T{\text}
\newcommand{\Om}{\Omega}
\newcommand{\no}[1]{\|{#1}\|}
\def\R{{\Bbb R}}
\def\C{{\Bbb C}}
\def\la{\langle}
\def\ra{\rangle}
\def\di{\partial}
\def\dib{\bar\partial}
\def\Label#1{\label{#1}}
\def\simge{\underset\sim>}
\def\T{\text}
\def\1#1{\overline{#1}}
\def\2#1{\widetilde{#1}}
\def\3#1{\widehat{#1}}
\def\4#1{\mathbb{#1}}
\def\5#1{\frak{#1}}
\def\6#1{{\mathcal{#1}}}
\def\C{{\4C}}
\def\R{{\4R}}
\numberwithin{equation}{section}
\def\T{\text}
\theoremstyle{plain}
\newtheorem{theorem}{Theorem}[section]
\newtheorem{corollary}[theorem]{Corollary}
\newtheorem{lemma}[theorem]{Lemma}
\theoremstyle{definition}
\theoremstyle{remark}
\newtheorem{remark}[theorem]{Remark}
\begin{document}

\title[Kobayashi metric near a point of infinite type]{Boundary behavior of the Kobayashi metric near a point of infinite type}
  
  \author[T.V. Khanh]{Tran Vu Khanh}   
\begin{abstract}Under a potential-theoretical hypothesis named $f$-Property with $f$ satisfying $\displaystyle\int_t^\infty \dfrac{da}{a f(a)}<\infty$, we show that the Kobayashi metric $K(z,X)$ on a weakly pseudoconvex domain $\Om$, satisfies the estimate $K(z,X)\ge Cg(\delta_\Om(x)^{-1})|X|$ for any $X\in T^{1,0}\Om$ where $(g(t))^{-1}$ denotes the above integral and $\delta_\Om(z)$ is the distance from $z$ to $b\Om$.  
\\[2mm] {\it AMS Mathematics  Subject Classification (2000)}: Primary 32F45, 32H35
\\[1mm] {\it Key words and phrases}: Kobayashi metric, proper holomorphic map, finite and infinite type
\end{abstract}

\maketitle\tableofcontents 
\section{Introduction}
\Label{d1} Let $\Om$ be a pseudoconvex domain in $\C^n$ and $z_o$ be a boundary point. For a smooth monontonic
increasing function $f :[1+\infty)\to[1,+\infty)$ with $f (t)\le t^{\frac{1}{2}}$, we say that $\Om$ has the $f$-Property at $z_o$ if there exist a neigborhood $U$ of $z_o$ and a family of functions $\{\phi_\delta\}$ such that
\begin{enumerate}
  \item [(i)] $\phi_\delta$ are plurisubharmonic and $C^2$ on $U$ and $-1\le \phi_\delta \le0$;
  \item[(ii)] $\di\dib \phi_\delta\simge f(\delta^{-1})^2Id$ and $|D\phi_\delta|\lesssim  \delta^{-1}$ for any  $z\in U\cap \{z\in \Om:-\delta<r(z)<0\}$, where $r$ is a defining function of $\Om$.
  \end{enumerate} 
  Here and in what follows, $\lesssim$ and $\simge$ denote inequality up to a positive constant. Morever, we will use $\approx$ for the combination of $\lesssim$ and $\simge$. \\

In the joint work with G. Zampieri \cite{KZ10}, we show that the $f$-Property implies an $f$-estimate for the $\dib$-Neumann problem. In another paper \cite{KZ12}, we prove that an  $f$-estimate with $\dfrac{f}{\log }\to \infty$ at $\infty$  implies that the Bergman metric has a lower bound with the rate $g(t)=\dfrac{f}{\log}(t^{1-\eta})$ for $\eta>0$. The ideas leading to these results follow by Kohn \cite{Koh02}, Catlin \cite{Cat83, Cat87} and McNeal \cite{McN92a}. Combining the two results above, we obtain

\begin{theorem}\Label{B}Let $\Om$ be a pseudoconvex domain in $\C^n$ with $C^\infty$-smooth boundary and $z_o$ a point in the boundary $b\Om$. Assume that the $f$-Property holds at $z_o$ with $\dfrac{f}{\log }\nearrow \infty$ for $t\to\infty$. Then for any $\eta>0$ there is a neigborhood $U_\eta$ of $z_o$ and  a constant $C_\eta$ such that the Bergman metric $B$ of $\Om$ satisfies
\begin{eqnarray}\Label{b1}
B(z,X)\ge C_\eta \dfrac{f}{\log}(\delta^{-1+\eta}_\Om(z))|X|
\end{eqnarray}
for any $z\in U_\eta\cap\Om$ and $X\in T_z^{1,0}\C^n$.
\end{theorem}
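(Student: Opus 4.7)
The statement is essentially a corollary obtained by composing the two previously proven implications in sequence, so my plan is to chain them together and verify that the hypothesis $f/\log \nearrow \infty$ is exactly what the second implication requires.

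First, I would apply the main result of \cite{KZ10} to convert the geometric/potential-theoretic hypothesis into analytic information about $\dib$. Since $\Om$ has the $f$-Property at $z_o$, there is a neighborhood $U_0$ of $z_o$ such that a family $\{\phi_\delta\}$ of plurisubharmonic potentials with the stated bounds on their Hessians and gradients exists on $U_0$. By \cite{KZ10}, this implies an $f$-subelliptic estimate for the $\dib$-Neumann problem on $(0,q)$-forms supported in $U_0\cap\Om$, namely an estimate of the form $\|f(\La)u\|\lesssim \|\dib u\|+\|\dib^* u\|+\|u\|$ (with $\La$ the tangential pseudodifferential operator of order one).

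Second, I would feed this $f$-estimate into the main theorem of \cite{KZ12}. Since the hypothesis $\dfrac{f}{\log}\nearrow \infty$ at infinity is precisely the ``gain beyond logarithm'' condition needed there, the theorem produces, for each $\eta>0$, a neighborhood $U_\eta\subset U_0$ and a constant $C_\eta>0$ such that the Bergman metric satisfies
\begin{equation*}
B(z,X)\ge C_\eta g(\delta_\Om(z)^{-1})|X|,\quad z\in U_\eta\cap\Om,\ X\in T_z^{1,0}\C^n,
\end{equation*}
where $g(t):=\dfrac{f}{\log}(t^{1-\eta})$. Rewriting $g(\delta_\Om(z)^{-1})=\dfrac{f}{\log}(\delta_\Om(z)^{-(1-\eta)})$ and renaming $\eta$ (or equivalently replacing $1-\eta$ by a new small positive parameter) yields exactly the inequality \eqref{b1}.

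Since the proof is purely a combination, there is no substantial analytic obstacle; the only point to check carefully is the bookkeeping of parameters between the two cited results. Concretely, one must confirm that: (a) the neighborhood $U_0$ in which the $f$-estimate holds can be shrunk to the $U_\eta$ demanded in \cite{KZ12} without losing the estimate constants; (b) the monotonicity $\dfrac{f}{\log}\nearrow\infty$ ensures that $g$ is increasing, so that the bound $g(\delta_\Om^{-1+\eta}(z))$ makes sense and blows up as $z\to b\Om$; and (c) the shift from exponent $1-\eta$ to $-1+\eta$ in the argument of $f/\log$ is compatible with how \cite{KZ12} states its conclusion. Once these verifications are in place the theorem follows immediately, and the statement ``combining the two results above, we obtain'' in the text is literally the whole proof.
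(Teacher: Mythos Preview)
Your proposal is correct and matches the paper's approach exactly: the paper does not give a separate proof of this theorem but states it as the direct combination of the two cited results, namely that the $f$-Property implies an $f$-estimate for the $\dib$-Neumann problem \cite{KZ10}, and that an $f$-estimate with $f/\log\to\infty$ implies the stated lower bound on the Bergman metric \cite{KZ12}. Your parameter bookkeeping (shrinking neighborhoods, monotonicity of $g$, and the exponent relabeling) is the only content beyond citation, and it is handled appropriately.
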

 The purpose of this paper is to prove a  result similar to Theorem \ref{B} for the Kobayashi metric. Let us recall the definition of the Kobayashi metric. \\

 Let $\Om$ be a pseudoconvex domain in $\C^n$; the function $K: T^{1,0}\Om\to \R$ on the holomorphic tangent bundle, given by 
\begin{eqnarray}
\begin{split}
K(z,X)=&\inf\{ \alpha>0| \exists g: \Delta\to \Om   \T{~holomorphic with~} g(0)=z, g'(0)=\alpha^{-1}X\}\\
=& \inf\{r^{-1}|  \exists g: \Delta_r\to \Om   \T{~holomorphic with~} g(0)=z, g'(0)=X\},
 \end{split}
\end{eqnarray}
is called the Kobayashi metric of $\Om$.  Here $\Delta$ denotes the unit disc and $\Delta_r$ the disc in $\C$ centered at $0$ with radius $r$. \\

Our main result is the following 
\begin{theorem}\Label{t1}Let $\Om$ be a pseudoconvex domain in $\C^n$ with $C^2$-smooth boundary $b\Om$ and $z_o$ be a boundary point. Assume that $\Om$ has the $f$-Property at $z_o$ with $f$ satisfying $\displaystyle\int_t^\infty \dfrac{da}{a f(a)}<\infty$ for some $t>1$, and denote by $(g(t))^{-1}$ the above, finite, integral. Then, there is a neighborhood $V$ of $z_o$  such that
\begin{eqnarray}\Label{K1}
K(z,X)\simge  g(\delta^{-1}_{\Om}(z))|X|
\end{eqnarray}
for any $z\in V\cap\Om$ and $X\in T^{1,0}_z\C^n$. 
\end{theorem}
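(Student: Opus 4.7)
The plan is to follow the standard two-step scheme for Kobayashi metric lower bounds: a Sibony-type analytic-disc reduction to the Levi form of a bounded plurisubharmonic function, and a construction of such a function from the $f$-Property family $\{\phi_\delta\}$.

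\textbf{Step 1 (Sibony inequality).} I would first establish that whenever $\lambda\colon\Om\to[-1,0]$ is plurisubharmonic, $C^2$ near $z$, and satisfies $L_\lambda(z;X)\ge\mu^2|X|^2$, one has $K(z,X)\simge\mu\,|X|$. The proof uses an analytic disc $g\colon\Delta_r\to\Om$ with $g(0)=z$, $g'(0)=X$: the composition $u:=e^{\lambda\circ g}-1$ is subharmonic, $C^2$, bounded in $[e^{-1}-1,0]$, and satisfies $\Delta u(0)\ge(4/e)L_\lambda(z;X)$. Rescaling $\Delta_r\to\Delta$ and applying the universal bound $|\Delta u(0)|\lesssim 1$ for bounded $C^2$ subharmonic functions on the unit disc (obtained from Riesz's representation formula) gives $r^2 L_\lambda(z;X)\lesssim 1$, hence $K(z,X)\simge\sqrt{L_\lambda(z;X)}$.

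\textbf{Step 2 (construction of $\lambda$).} The core of the proof is to produce $\lambda$ with $L_\lambda(z;X)\simge g(\delta_\Om(z)^{-1})^2|X|^2$. I would superpose the $f$-Property functions dyadically: with $\sigma_k=2^{-k}$ and weights $c_k>0$,
\begin{equation*}
\lambda(z)=\frac{1}{C_0}\sum_{k\ge 0}c_k\,\widetilde\phi_{\sigma_k}(z),
\end{equation*}
where $\widetilde\phi_{\sigma_k}$ denotes the extension of $\phi_{\sigma_k}$ from $U$ to $\Om$ obtained by taking a regularized maximum with a constant multiple of the Diederich--Fornaess bounded plurisubharmonic exhaustion of $\Om$ (which exists since $b\Om$ is $C^2$), and $C_0:=\sum c_k$. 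Choosing $c_k$ by $c_k f(2^k)^2 = g(2^{k+1})^2-g(2^k)^2$ makes the partial sums telescope: for $z$ with $\delta_\Om(z)\approx 2^{-K}$, property (ii) of the $f$-Property applies to every summand with $k\le K$, so
\begin{equation*}
i\di\dib\lambda(z)\simge \frac{1}{C_0}\sum_{k\le K} c_k f(2^k)^2\,\mathrm{Id} = \frac{1}{C_0}\bigl(g(2^{K+1})^2-g(1)^2\bigr)\,\mathrm{Id}\approx g(\delta_\Om(z)^{-1})^2\,\mathrm{Id}.
\end{equation*}
Differentiating the defining integral $g(t)^{-1}=\int_t^\infty da/(a f(a))$ gives $(g^2)'(t)=2g(t)^3/(tf(t))$, so $c_k\approx 2g(2^k)^3/f(2^k)^3$; the finiteness of $C_0$ then follows, after an Abel summation, from the integrability hypothesis $\int^\infty da/(af(a))<\infty$ (together with the elementary bound $g(t)\le 2f(2t)$ extracted from the piece $\int_t^{2t}$ of the defining integral).

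Combining the two steps with $\mu=g(\delta_\Om(z)^{-1})$ gives the estimate \eqref{K1}. The main obstacle is \textbf{Step 2}: calibrating the weights $c_k$ so that the series is summable (to keep $\lambda$ valued in $[-1,0]$) while the telescoping bound on $\sum_{k\le K}c_kf(2^k)^2$ matches precisely $g(2^K)^2$ is what forces the rate $g$ into the statement, and the integrability hypothesis $\int^\infty da/(af(a))<\infty$ enters exactly at the point where the convergence of $C_0$ is verified. A secondary technical point is the global extension $\phi_\delta\mapsto\widetilde\phi_\delta$ by regularized maximum: one must arrange constants so that the Hessian lower bound on $U\cap\{-\delta<r<0\}$ is preserved, while $\widetilde\phi_\delta$ becomes plurisubharmonic on all of $\Om$; the gradient bound $|D\phi_\delta|\lesssim \delta^{-1}$ in (ii) turns out to be exactly what is needed to match the scale of the exhaustion and carry out this gluing without destroying the estimate.
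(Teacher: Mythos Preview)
Both steps contain genuine errors, not just missing details. In Step~1 the claimed ``universal bound $|\Delta u(0)|\lesssim 1$ for bounded $C^2$ subharmonic functions on the unit disc'' is false: the Riesz representation only yields the weighted integral bound $\int_\Delta \Delta u(w)\,\log(1/|w|)\,dA(w)\lesssim 1$, which does not control the point value $\Delta u(0)$. A concrete counterexample is $u(t)=-1+\phi(|t|)$ with $\phi(r)=Mr^2/2$ on $[0,\rho]$ and $\phi(r)=M\rho^2/2+M\rho^2\log(r/\rho)$ on $[\rho,1]$, normalised so $\phi(1)=1$ and smoothed at $r=\rho$; this is subharmonic with values in $[-1,0]$, yet $\Delta u(0)=2M\approx(\rho^2\log(1/\rho))^{-1}\to\infty$. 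The correct Sibony lemma needs the auxiliary function to be log-plurisubharmonic and to vanish at $z$, a structure that $e^{\lambda\circ g}-1$ does not have. In Step~2 your weights make $C_0=\sum_k c_k$ diverge already for $f(t)=t^\epsilon$: then $g(t)=\epsilon t^\epsilon$ and $c_k=\bigl(g(2^{k+1})^2-g(2^k)^2\bigr)/f(2^k)^2=\epsilon^2(2^{2\epsilon}-1)$ is a nonzero constant independent of $k$, so no Abel summation can rescue the normalisation of $\lambda$ into $[-1,0]$. The tension is structural: for a single bounded plurisubharmonic function one cannot simultaneously have $\sum_{k\le K}c_k f(2^k)^2\gtrsim g(2^K)^2$ for all $K$ and $\sum_k c_k<\infty$, uniformly over the class of $f$ allowed by the hypothesis.

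The paper proceeds by an entirely different route. It first patches the $\phi_\delta$ (with cutoffs, not weights) into a single function $\Phi$ with self-bounded gradient and Hessian $\gtrsim f(r^{-1})^2$ modulo controlled error terms in $r^{-1}|\di\dib r|$ and $r^{-2}|\partial r|^2$; then, for each boundary point $w$, it builds a \emph{bumping} defining function $\rho(z,w)=r(z)+G(|z-w|)\bigl(-1+\epsilon\Phi(z)\bigr)$, where $G(\delta)=\bigl(g^*((\gamma\delta)^{-1})\bigr)^{-1}$ solves $\dot G/G=\gamma f(1/G)$, and shows that the hypersurface $\{\rho(\cdot,w)=0\}$ is pseudoconvex and touches $\bar\Om$ only at $w$. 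From $\rho$ one gets Diederich--Forn{\ae}ss peak functions $\psi_w=-(-\rho\,e^{L|\cdot-w|^2})^\eta$ satisfying $\psi_w(z)\le -G^\eta(|z-w|)$ and $\psi_{\pi(z)}(z)\gtrsim -\delta_\Om(z)^\eta$, and the Kobayashi lower bound follows by applying the sub-mean-value inequality to $\psi_{\pi(z)}\circ g$ on an analytic disc together with Jensen's inequality for the convex function $G^\eta$. The passage from the rate $f$ to the rate $g$ thus occurs through the ODE governing $G$ in the bumping construction, not through any reweighting of the family $\{\phi_\delta\}$.
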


We remark that Theorem~\ref{t1} may apply to domains of both finite  and infinite type; in the first case we take $f(t)=t^\epsilon,\, g(t)=t^\epsilon$, and in the second $f(t) = \log^{1+\epsilon}$, $g(t)=\log^\epsilon t$. \\

Comparing with Theorem~\ref{B}, we reduce the $C^\infty$-smoothness of the boundary and slightly strengthen the hypothesis of $f$ in the $f$-Property since $\displaystyle\int_t^\infty \dfrac{da}{a f(a)}<\infty$ is stronger than $\underset{a\to \infty}{\lim}\dfrac{f(a)}{\log a}=\infty$. Morever,  we obtain a larger size of the lower bound of the Koybayashi metric; for example, in the case $f=t^\epsilon$  we have $g=t^\epsilon$ instead of $g=t^{\epsilon-\eta}$ and, for $f=\log^{1+\epsilon}t$ we have $g=log^\epsilon t$ instead of $\log^\epsilon(t^{1-\eta})$. \\

Using the $f$-Property constructed by Catlin in \cite{Cat87, Cat89}, McNeal \cite{McN91b, McN92b}, Khanh-Zampieri \cite{KZ10}, Khanh \cite{Kha10}, we have the following
\begin{corollary}1) Let $\Om$ be a pseudoconvex domain of finite type $m$ in $\C^n$. Then \eqref{K1} holds for $g(t)=t^{\frac{1}{m}}$ if  $\Om$ satisfies at least one of the following conditions: $\Om$ is strongly pseudoconvex, or $\Om$ is convex, or $n=2$, or $\Om$ is decoupled. In any case, we have $g(t)=t^\epsilon$ with $\epsilon=m^{-n^2m^{n^2}}$.\\

2) Let $\Om$ be defined by
$\Om=\{z\in \C^n:\T{Im}z_n+\sum_{j=1}^{n-1}P_j(z_j)<0\},$
where $\Delta P_j(z_j)\simge \dfrac{\exp(-1/|x_j|^\alpha)}{x_j^2}$ or $\dfrac{\exp(-1/|y_j|^\alpha)}{y_j^2}$ wih $\alpha<1$. Then \eqref{K1} holds for $g(t)=\log^{\frac1\alpha-1}t$.
\end{corollary}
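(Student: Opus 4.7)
The strategy is a direct reduction to Theorem~\ref{t1}. In each case I identify from the cited literature an explicit $f$ for which the $f$-Property holds at every boundary point near $z_o$, verify that $\int_t^\infty \frac{da}{af(a)}<\infty$, and then compute that integral to read off the rate $g$ appearing in \eqref{K1}. There is essentially no analytic obstacle in the Corollary itself; the deep work is packaged into the cited constructions of $f$, and what remains is elementary bookkeeping.

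For part~1), I would invoke the known constructions of the $f$-Property with $f(t)=t^{1/m}$ in each listed subclass: strong pseudoconvexity trivially gives $f(t)=t^{1/2}$ (subelliptic gain $1/2$); in $\C^2$ the bound $f(t)=t^{1/m}$ is Catlin's subelliptic construction \cite{Cat87, Cat89}; for convex finite type $m$ it is McNeal's polydisc construction \cite{McN91b, McN92b}; and for decoupled finite type $m$ it is established in \cite{KZ10, Kha10}. In all of these cases,
\[
\int_t^\infty \frac{da}{a\cdot a^{1/m}} \;=\; m\,t^{-1/m},
\]
so $g(t)\approx t^{1/m}$. For a general smooth pseudoconvex domain of finite type $m$ in $\C^n$, Catlin's fundamental construction in \cite{Cat87} furnishes the $f$-Property only with the much weaker exponent $\epsilon=m^{-n^2 m^{n^2}}$, and the same calculation with $\epsilon$ in place of $1/m$ produces $g(t)\approx t^{\epsilon}$.

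For part~2), the pointwise lower bound $\Delta P_j(z_j)\simge \exp(-1/|x_j|^\alpha)/x_j^2$ (or the analogous bound in $y_j$) is precisely the standing hypothesis used in \cite{Kha10, KZ10}: plurisubharmonic bumps $\phi_\delta$ are built coordinatewise from each $P_j$ and summed, producing the $f$-Property with $f(t)=\log^{1/\alpha}t$. The integral is then evaluated by the substitution $u=\log a$,
\[
\int_t^\infty \frac{da}{a\log^{1/\alpha}a} \;=\; \int_{\log t}^\infty u^{-1/\alpha}\,du \;=\; \frac{\alpha}{1-\alpha}\,(\log t)^{\,1-1/\alpha},
\]
which is finite \emph{exactly} because $\alpha<1$, and yields $g(t)\approx \log^{1/\alpha-1}t$. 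The only point that requires any care is to confirm case by case that the integrability hypothesis of Theorem~\ref{t1} is indeed satisfied by the $f$ produced by the references; this is guaranteed in every case by the quantitative lower bounds ($f\simge t^{1/m}$, $t^\epsilon$, or $\log^{1/\alpha}t$ with $\alpha<1$) they provide.
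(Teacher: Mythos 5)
Your proposal is correct and takes essentially the same approach the paper intends: the paper gives no explicit proof of the Corollary, only the preamble ``Using the $f$-Property constructed by Catlin, McNeal, Khanh--Zampieri, Khanh,'' so the argument is exactly the reduction you describe — read off $f$ from the cited constructions ($f(t)=t^{1/m}$ in the four listed subclasses, $f(t)=t^\epsilon$ with $\epsilon=m^{-n^2m^{n^2}}$ in the general finite-type case, $f(t)=\log^{1/\alpha}t$ in the exponentially flat case), check integrability, and evaluate $g(t)^{-1}=\int_t^\infty\frac{da}{af(a)}$. Your two computations $\int_t^\infty a^{-1-1/m}\,da=mt^{-1/m}$ and $\int_t^\infty\frac{da}{a\log^{1/\alpha}a}=\frac{\alpha}{1-\alpha}(\log t)^{1-1/\alpha}$ (convergent precisely because $\alpha<1$) correctly yield $g(t)\approx t^{1/m}$ and $g(t)\approx\log^{1/\alpha-1}t$, matching the statement.
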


The lower bound of the Kobayashi metric is an important tool in the function theory of several complex variables and has been studied by many authors. In the following, we briefly review some significant, classical results.\\
 
When $\Om$ is  strongly pseudoconvex  or else it is pseudoconvex of finite type in $\C^2$ and  decoupled or convex in $\C^n$, then the size of the  Kobayashi metric  has been described by I. Graham \cite{Gra75a}, D. Catlin \cite{Cat89},  G. Herbort \cite{Her92} and L. Lee \cite{Lee08}. In these classes of domains, there exists a quantity $M(z,X)$  which satisfies the asymptotic formula 
$$\underset{z\to b\Om}{\lim}M(z,X)=\delta_\Om^{-1/m}(z)|X^\tau|+\delta_\Om^{-1}(z)|X^\nu|,$$
(where, $X^\tau$ and $X^\nu$ are the tangential and normal components of $X$ and $m$ is the type of the boundary), 
such that 
$$K(z, X)\approx M(z,X).$$

For a general pseudoconvex domain in $\C^n$, K. Diederich and J. E. Fornaess \cite{DF79} proved, by using Kohn's algorithm \cite{Koh79}, that there is a $\epsilon>0$ such that $K(z,X)\simge \delta(z)^{-\epsilon}|X|$  if $b\Om$ is real analytic  of finite type.  By using the method of Catlin in \cite{Cat87, Cat89}, S. Cho \cite{Cho92} improved the result  of \cite{DF79} for domains which are not necessarily real analytic.  However, in the case of infinite type we know very little except from the recent results by S. Lee \cite{Lee01} for the exponentially-flat infinite type. \\

Among other uses of the lower bound of the Kobayashi metric, we mention the continuous extendibility of proper holomorphic maps to the boundary of a domain of  general type. We refer readers to \cite{Hen73, BF78e, DF79, Ran78} for this problem on domains of finite type.

\begin{theorem}\Label{fHolder}Let $\Om$ and $\Om'$ be pseudoconvex domains. Let $\eta, 0<\eta\le 1$ be such that there is a $C^2$ defining function $r$ of $\Om$ with the property that $-(-r)^\eta$ is strictly plurisubharmonic on $\Om$. Assume that $\Om'$ has the $f$-Property with $f$ satisfying $\displaystyle\int_t^\infty \dfrac{(\ln a-\ln t) da}{a f(a)}<\infty$ for some $t>1$, and denote by $(\tilde f(t))^{-1}$ this finite integral. Then any proper holomorphic map $\Psi: \Om\to \Om'$ can be extended as a general H\"older continuous map $\hat\Psi:\bar\Om\to \bar\Om'$ with a rate $\tilde f(t^{\eta})$, that is, 
$$|\hat\Psi(z)-\hat\Psi(w)|\lesssim \tilde{f}(|z-w|^{-\eta})^{-1}$$
for any $z,w\in \bar\Om$. 
\end{theorem}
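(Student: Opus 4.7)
The plan is to combine three ingredients: the distance-decreasing property of the Kobayashi metric under $\Psi$, the lower bound $K_{\Om'}(w,Y)\gtrsim g(\delta_{\Om'}^{-1}(w))|Y|$ obtained by applying Theorem~\ref{t1} to $\Om'$, and a Diederich--Fornaess type comparison based on the hypothesis that $-(-r)^\eta$ is plurisubharmonic on $\Om$. The rate $\tilde f$ fits the data because a Fubini calculation gives
\[
\tilde f(t)^{-1}=\int_t^\infty\frac{(\ln a-\ln t)\,da}{a\,f(a)}=\int_t^\infty\frac{db}{b\,g(b)},
\]
so $\tilde f$ is built from $g$ exactly as $g$ is built from $f$---which is precisely the integration needed to convert an infinitesimal Kobayashi bound into an oscillation estimate.

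First I would establish the boundary-distance comparison $\delta_{\Om'}(\Psi(z))\gtrsim\delta_\Om(z)^\eta$. Pseudoconvexity of $\Om'$ provides a bounded negative plurisubharmonic exhaustion $\rho'$ (Diederich--Fornaess), so $\rho'\circ\Psi$ is a negative plurisubharmonic function on $\Om$; a Hopf-type maximum-principle comparison between $\rho'\circ\Psi$ and the plurisubharmonic function $-(-r)^\eta$ yields the claim. Combining this with the Schwarz--Pick inequality $K_{\Om'}(\Psi(z),d\Psi(z)X)\leq K_\Om(z,X)\lesssim|X|/\delta_\Om(z)$ and Theorem~\ref{t1}, I obtain the pointwise bound
\[
|d\Psi(z)X|\lesssim\frac{|X|}{\delta_\Om(z)\,g(\delta_\Om(z)^{-\eta})}.
\]
For $z,w\in\Om$ with $|z-w|=\rho$ small, joining $z$ and $w$ by a piecewise curve that first moves inward-normally from $z$ until the boundary-distance reaches $\rho$, then tangentially for a length of order $\rho$, and finally back to $w$, the substitution $s=\delta^{-\eta}$ turns each normal segment of the integral $\int|d\Psi|$ into $\tfrac{1}{\eta}\int_{\rho^{-\eta}}^{\delta_\Om(z)^{-\eta}}\tfrac{ds}{s\,g(s)}\leq\tfrac{1}{\eta}\tilde f(\rho^{-\eta})^{-1}$, while the tangential segment contributes $1/g(\rho^{-\eta})\lesssim\tilde f(\rho^{-\eta})^{-1}$ by the monotonicity of $g$. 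Letting $z,w$ approach points of $b\Om$ produces the continuous extension $\hat\Psi$ and the claimed estimate.

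The main obstacle is the first step: extracting the sharp exponent $\eta$ in $\delta_{\Om'}(\Psi(z))\gtrsim\delta_\Om(z)^\eta$. A bounded plurisubharmonic exhaustion of a general pseudoconvex domain $\Om'$ satisfies only $-\rho'\asymp\delta_{\Om'}^\gamma$ for some $\gamma\in(0,1]$, and the Hopf-type comparison then initially yields only $\delta_{\Om'}(\Psi(z))\gtrsim\delta_\Om(z)^{\eta/\gamma}$; one must either sharpen $\rho'$ using the $f$-Property on $\Om'$ or absorb $\gamma$ into constants so that the rate $\tilde f(|z-w|^{-\eta})^{-1}$ survives. The path construction and the case distinction $\delta_\Om(z)\gtrless\rho$ in the final step are standard but must be arranged uniformly as $z,w$ approach $b\Om$.
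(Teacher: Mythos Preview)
Your overall architecture matches the paper's proof: lower bound for $K_{\Om'}$ from Theorem~\ref{t1}, Schwarz--Pick, a boundary--distance comparison, and then a Hardy--Littlewood type integration (your explicit path argument is exactly how Lemma~\ref{HL} is proved, and your Fubini identity $\tilde f(t)^{-1}=\int_t^\infty db/(b\,g(b))$ is the same computation the paper carries out). There is, however, a genuine error in the key comparison step.

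You claim $\delta_{\Om'}(\Psi(z))\gtrsim\delta_\Om(z)^\eta$, but the bound actually needed---and the one the paper invokes (Lemma~8 of \cite{DF79})---is the \emph{opposite}: $\delta_{\Om'}(\Psi(z))\lesssim\delta_\Om(z)^\eta$. From
\[
g\bigl(\delta_{\Om'}(\Psi(z))^{-1}\bigr)\,|d\Psi(z)X|\lesssim K_{\Om'}(\Psi(z),d\Psi(z)X)\le K_\Om(z,X)\lesssim \delta_\Om(z)^{-1}|X|
\]
one must bound $g\bigl(\delta_{\Om'}(\Psi(z))^{-1}\bigr)$ from \emph{below} by $g(\delta_\Om(z)^{-\eta})$; since $g$ is increasing, this forces $\delta_{\Om'}(\Psi(z))^{-1}\gtrsim\delta_\Om(z)^{-\eta}$, i.e.\ $\delta_{\Om'}(\Psi(z))\lesssim\delta_\Om(z)^\eta$. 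With your inequality the implication fails.

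Your proposed mechanism is also reversed. Pulling back a Diederich--Forn\ae ss exhaustion $\rho'=-(-r')^{\gamma}$ of $\Om'$ and applying Hopf on $\Om$ yields $\rho'\circ\Psi\le -c\,\delta_\Om$, hence $\delta_{\Om'}(\Psi(z))\gtrsim\delta_\Om(z)^{1/\gamma}$, which is the direction you stated but not the one you need. The correct move is to \emph{push forward}: since $-(-r)^\eta$ is plurisubharmonic on $\Om$ and $\Psi$ is proper, $u(w)=\max_{\Psi(z)=w}\bigl(-(-r(z))^\eta\bigr)$ is a bounded negative plurisubharmonic function on $\Om'$; Hopf's lemma on $\Om'$ then gives $u(w)\le -c\,\delta_{\Om'}(w)$, i.e.\ $\delta_{\Om'}(\Psi(z))\lesssim\delta_\Om(z)^\eta$ with the sharp exponent $\eta$ and no spurious $\gamma$. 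Once this is corrected, the rest of your outline goes through, and the ``main obstacle'' you flag at the end disappears.
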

The paper is organized as follows. In section 2, using the $f$-Property, we construct the bumping functions.  By the  existence  of  suitable exhaustion functions, we obtain the plurisubharmonic peak functions having the good estimates. The lower bound of the Kobayashi metric follows from the estimates of the plurisubharmonic peak functions (cf. Section 3). In Section 4, we prove Theorem~\ref{fHolder}. \\


\section{The bumping function}
In this section, we construct the bumping functions, which might also be useful for other purposes. We will prove that, for any  boundary point $z_o$ on $b\Om$ which satisfies the $f$-Property, we can find a pseudoconvex hypersurface touching $\bar\Om$ exactly at  $w$ from the outside such that the distance from $z\in\Om$ to the new hypersurface is exactly controlled by the rate in$ |z-w|^{-1}$ of the reciprocal of the inverse of $g$.
\begin{theorem} \Label{bumpingfunct}
Let $\Om$ be pseudoconvex and $z_o$ be a boundary point.  Assume that $\Om$ has the $f$-Property at $z_o$ with $f$ satisfying $\displaystyle\int_t^\infty \dfrac{da}{a f(a)}<\infty$ for some $t>1$, and denote by $(g(t))^{-1}$ this finite integral. Then there is a neigborhood $U$ of $z_o$ and a real $C^2$ function $\rho$ on $U\times(U\cap b\Om)$ with the following properties:   
\begin{enumerate}
  \item $\rho(w,w)=0$.
  \item $\rho(z,w)\le -G(|z-w|)$ for any $(z,w)\in (U\cap \Om)\times (U\cap b\Om)$ where $G(\delta)=\left(g^*(\gamma\delta^{-1}\right)^{-1}$. Here, the supercript $^*$ denotes the inverse function and $\gamma>0$ sufficiently small. 
  \item $\rho(z,\pi(z))\gtrsim -\delta_\Om(z)$ for any $z\in U\cap \Om$ where $\pi(z)$ is the projection of $z$ to the boundary.
  \item For each fixed $w\in U\cap b\Om$, denote $S_w=\{z\in U: \rho(z,w)=0 \}$. One has:
  \begin{enumerate}
    \item $|D_z\rho(z,w)| \approx1$ everywhere on $S_w$.
     \item $S_w$ is pseudoconvex. In fact, one can choose $\rho$ such that $S_w$ is strongly pseudoconvex outside of $w$.
     \item $S_w$ touches $\bar\Om$ exactly at $w$ from outside.
\end{enumerate}
\end{enumerate}
\end{theorem}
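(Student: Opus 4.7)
The plan is to construct $\rho(z,w)$ in two stages: (i) assemble from the family $\{\phi_\delta\}$ provided by the $f$-Property a single plurisubharmonic ``master'' function $\Psi$ on a neighborhood $U$ of $z_o$ whose growth near $b\Om$ is controlled by the integral $\int^{\infty}\frac{da}{af(a)}$, and (ii) perturb $r$ using $\Psi$ together with a Levi-polynomial term at $w$ to produce $\rho$.

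\medskip

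\noindent\textbf{Stage 1 (construction of $\Psi$).} Choose dyadic scales $\delta_k = 2^{-k}$ and set
$$\Psi(z) = \sum_{k \ge k_0} c_k\,\phi_{\delta_k}(z),$$
with positive weights normalized by $\sum_k c_k = 1$, so that $\Psi$ is plurisubharmonic on $U$ and $-1\le\Psi\le 0$. On the shell $\{-\delta_k<r<-\delta_{k+1}\}$, each $\phi_{\delta_j}$ with $j\le k$ is already saturated at scale $\delta_j\ge\delta_k$ (its Hessian has been integrated across its own strip), hence is pinned below a definite negative constant; the partial sum up to $k$ therefore contributes approximately $-\sum_{j\le k}c_j$. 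Choosing weights so that the tail satisfies $\sum_{j>k}c_j \approx 1/g(\delta_k^{-1}) = \int_{\delta_k^{-1}}^{\infty}\frac{da}{af(a)}$ makes $\Psi(z) \le -1 + c/g(\delta_\Om(z)^{-1})$ on the near-boundary strip. Both convergence of the sum and the lower bound $\Psi\ge -1$ hinge on the integrability hypothesis.

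\medskip

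\noindent\textbf{Stage 2 (construction of $\rho$).} For $w\in U\cap b\Om$, let $L_w(z)$ denote the Levi polynomial of $r$ at $w$ (the holomorphic part of the Taylor polynomial of $r$ through order two). Set
$$\rho(z,w) = r(z) - 2\eta\,\T{Re}\,L_w(z) + \lambda\bigl(\Psi(z)-\Psi(w)\bigr),$$
for small $\eta,\lambda>0$. Property~(1) is immediate from $r(w)=0=L_w(w)$. The $C^1$-bound $|D\phi_{\delta_k}|\lesssim\delta_k^{-1}$ summed against the chosen weights, together with the smallness of $\eta,\lambda$, yields $|D_z\rho|\approx|Dr|\approx 1$ on $S_w$, giving (4a); property~(3) $\rho(z,\pi(z))\gtrsim -\delta_\Om(z)$ follows from $r(z)\gtrsim-\delta_\Om(z)$ and the same derivative bounds. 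Pseudoconvexity of $S_w$ (4b) comes from plurisubharmonicity of $\Psi$ and the fact that the Levi-polynomial term only perturbs the Hessian by an arbitrarily small amount for $\eta$ small; strong pseudoconvexity off $w$ is achieved by adding a $|z-w|^2$ correction, and (4c) then follows from (2) combined with (4b).

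\medskip

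\noindent\textbf{Stage 3 (the rate, and main obstacle).} The crucial estimate is (2). For $z\in U\cap\Om$ close to $w$, $r(z)\le 0$ and $|\T{Re}\,L_w(z)|\le C|z-w|^2$, while the Stage~1 estimate applied at scale $\delta\approx|z-w|$ gives $\lambda(\Psi(z)-\Psi(w)) \lesssim -\lambda/g(|z-w|^{-1})$. Combining yields $\rho(z,w) \le -c\lambda/g(|z-w|^{-1})$, which is $\le -G(|z-w|)$ once one identifies $G(\delta) = 1/g^*(\gamma/\delta)$. The \emph{main obstacle} is the calibration in Stage~1: the weights $c_k$ must be chosen so that the single bounded PSH function $\Psi$ simultaneously respects $-1\le\Psi\le 0$ \emph{and} decays pointwise at the rate $1/g(\delta^{-1})$. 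The integrability $\int^{\infty}\frac{da}{af(a)}<\infty$ is exactly what allows both; a weaker hypothesis of the form $f/\log\to\infty$ only yields an integrated/averaged version (as in \cite{KZ12}), translating to a loss of $\delta^\eta$ in the final rate. Upgrading from this averaged estimate to a pointwise bound is what makes possible the sharp $g$-rate obtained in the present paper over Theorem~\ref{B}.
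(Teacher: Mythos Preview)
Your approach has a structural gap in Stage~1. The $f$-Property supplies a lower bound on the \emph{Hessian} of $\phi_\delta$ on the strip $\{-\delta<r<0\}$, not on its values; the assertion that $\phi_{\delta_j}$ is ``pinned below a definite negative constant'' on the $k$-th shell for $j\le k$ is not a consequence of the hypothesis (integrating the Hessian bound $f(\delta_j^{-1})^2$ across a width $\delta_k$ yields a drop of order $f(\delta_j^{-1})^2\delta_k^2$, negligible for $j\ll k$). So the pointwise estimate $\Psi(z)\le -1+c/g(\delta_\Om(z)^{-1})$ is unproved. Even granting it, the gradient $|D\Psi|\lesssim\sum_k c_k\delta_k^{-1}\approx\int^\infty dt/f(t)$ diverges (recall $f(t)\le t^{1/2}$), so (3) and (4a) cannot be obtained from ``the same derivative bounds''. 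Two further mismatches: the Levi polynomial $L_w$ contributes nothing at weakly pseudoconvex points, and $G(\delta)=1/g^*(\gamma\delta^{-1})$ is \emph{not} $1/g(\delta^{-1})$ (for $f=t^\epsilon$ these are $\approx\delta^{1/\epsilon}$ versus $\approx\delta^\epsilon$), so the rate identification in Stage~3 is off. Finally, $S_w$ lives in $U\setminus\bar\Om$, so whatever plurisubharmonicity you invoke must hold \emph{outside} $\Om$, whereas your $\Psi$ is assembled from data on inner strips.

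The paper's construction is conceptually different. After reflecting the $f$-Property to the exterior (Step~1), it forms a cutoff sum $\Phi=\sum_j(e^{\phi_{2^{-j}}}-1)\chi(2^jr)$ which is \emph{not} plurisubharmonic but satisfies, on $U\setminus\bar\Om$,
\[
\di\dib\Phi(X,\bar X)\ge cf^2(r^{-1})|X|^2+\tfrac18|X\Phi|^2-C\bigl(r^{-1}|\di\dib r(X,\bar X)|+r^{-2}|Xr|^2\bigr)
\]
(Lemma~\ref{A3}). The rate enters \emph{multiplicatively}, not through the values of $\Phi$: one sets $\rho(z,w)=r(z)+G(|z-w|)\bigl(-1+\epsilon\Phi(z)\bigr)$. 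Properties (1)--(3) are then immediate from $-1\lesssim\Phi\le0$. Pseudoconvexity of $S_w$ is the real work: on $S_w$ one has $r\approx G(|z-w|)$, and $G$ is engineered to solve the ODE $\dot G/G=\gamma f(G^{-1})$ (Step~3, Claim~(1)), so the positive term $\epsilon G\cdot f^2(r^{-1})\approx \epsilon\gamma^{-2}\dot G^2/G$ dominates both the negative second derivatives of $G(|z-w|)$ and the error terms from $\Phi$, once $\gamma$ and $\epsilon$ are chosen small. The integrability $\int^\infty da/(af(a))<\infty$ is used only to ensure that $g$, hence $g^*$ and $G$, are well defined --- not to tune weights in a convergent series.
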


 The proof is divided in four steps. In step 1, we show the equivalence of the $f$-Property between the pseudoconvex and the pseudoconcave side of a hypersurface. In step 2, we prove that there exists a single function with self-bounded gradient which has a lower bound $f(r^{-1}(z))$ for the Levi form.  In step 3, we estimate the function $G$.  The properties of bumping function is checked on step 4. \\ 
 
{\it Proof of Theorem~\ref{bumpingfunct}.}\\

{\bf Step 1.}  Since the hypersurface defined by each bumping function lies outside the original domain except from  one point and the $f$-Property takes place inside the domain,  we first show hat the $f$-Property still holds outside the domain. \\

 Without loss of generality, we can assume that the original point $z_o$ belongs to $U\cap b\Om$. We choose  special coordinates $z=(x,r)\in \R^{2n-1}\times \R$ at $z_o$. Assume that there is a family of functions $\phi_\delta$ which have properties (i) and (ii) in the first paragraph of Section 1. Define  $\tilde\phi_\delta(x,r):=\phi_\delta(x,r-\delta)$ for each $\delta>0$ and still call $\phi_\delta$ for $\tilde\phi_\delta$.  Then,  for each $\delta$, $\phi_\delta$ is $C^2$, plurisubharmonic, and satisfies on $U$ $-1\le\phi_\delta\le0$, $\di\dib \phi_\delta\simge f^2(\delta^{-1})Id$, and $|D\phi_\delta|\lesssim \delta^{-1}$ on $-\delta<r-\delta<0$ or $0<r<\delta$.\\

{\bf Step 2.} In this step we build a single function which has self-bounded gradient and has  lower bound $f(r^{-1}(z))$ for the Levi form.
 \begin{lemma}\Label{A3} Assume that $\Om$ enjoys the $f$-Property at $z_0$. Then there is a single function $\Phi$ and constants $c,C>0$ such that
\begin{enumerate}
  \item $-1\lesssim \Phi\le 0$
  \item $\di\dib\Phi(X,X)\ge-C(\dfrac{1}{r}|\di\dib r(X,\bar X)|+\dfrac{1}{r^2}|Xr|^2)+\dfrac{1}{8}|X\Phi|^2+ cf^2(\dfrac{1}{r})|X|^2$
  \item $|D\Phi|\lesssim \dfrac{1}{r}$
\end{enumerate}
for any $z\in U\setminus\bar\Om$.
\end{lemma}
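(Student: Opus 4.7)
The plan is to combine the $f$-Property family $\{\phi_\delta\}$---which by Step~1 is available on the exterior strip $\{0<r<\delta\}$---into a single $\Phi$ in two moves: first replace each $\phi_\delta$ by an exponentiated version carrying self-bounded gradient, then glue via a dyadic partition of unity in $r$.

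Set $\psi_\delta:=e^{\phi_\delta/2}-1$, so $e^{-1/2}-1\le\psi_\delta\le 0$, $|D\psi_\delta|\lesssim\delta^{-1}$, and on $\{0<r<\delta\}$
\begin{equation*}
\partial\bar\partial\psi_\delta(X,\bar X)=\tfrac12 e^{\phi_\delta/2}\partial\bar\partial\phi_\delta(X,\bar X)+\tfrac14 e^{\phi_\delta/2}|X\phi_\delta|^2\ge |X\psi_\delta|^2+c_0\,f^2(\delta^{-1})|X|^2,
\end{equation*}
since $|X\psi_\delta|^2=\tfrac14 e^{\phi_\delta}|X\phi_\delta|^2\le\tfrac14 e^{\phi_\delta/2}|X\phi_\delta|^2$ (as $\phi_\delta\le 0$). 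Next, with $\delta_k:=2^{-k+1}$, fix a smooth partition of unity $\sum_k\chi_k\equiv 1$ on $(0,r_*)$ with $\chi_k$ supported in $[\delta_k/4,\delta_k]$, and define
\begin{equation*}
\Phi(z):=\kappa\sum_k\chi_k(r(z))\,\psi_{\delta_k}(z)
\end{equation*}
for a small absolute constant $\kappa>0$ to be fixed. On $\mathrm{supp}\,\chi_k$ we have $r(z)\le\delta_k$, so $\psi_{\delta_k}$ is evaluated in its good strip and $f(\delta_k^{-1})\approx f(1/r)$; only boundedly many $\chi_k$ are active at any $z$. Conclusions~(1) and~(3) follow at once from $|\psi_{\delta_k}|\lesssim 1$, $|D\psi_{\delta_k}|\lesssim 1/\delta_k\approx 1/r$, and $|\chi_k^{(j)}|\lesssim 1/r^j$, after shrinking $\kappa$.

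For~(2), expand $\partial\bar\partial\Phi(X,\bar X)$ by Leibniz. The bulk term $\kappa\sum\chi_k\,\partial\bar\partial\psi_{\delta_k}(X,\bar X)$ dominates $\kappa\sum\chi_k|X\psi_{\delta_k}|^2+\kappa c_1 f^2(1/r)|X|^2$; the ``all on $\chi_k$'' term $\kappa\sum[\chi_k''(r)|Xr|^2+\chi_k'(r)\partial\bar\partial r(X,\bar X)]\psi_{\delta_k}$ is bounded below by $-C\kappa\bigl(\tfrac{1}{r^2}|Xr|^2+\tfrac{1}{r}|\partial\bar\partial r(X,\bar X)|\bigr)$; and the cross term $2\,\mathrm{Re}\,\kappa\sum\chi_k'(r)\,Xr\,\overline{X\psi_{\delta_k}}$ is absorbed by Cauchy--Schwarz into $\tfrac12\kappa\sum\chi_k|X\psi_{\delta_k}|^2+C\kappa|Xr|^2/r^2$. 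Finally, since $|X\Phi|^2\le C\kappa^2\bigl(\sum\chi_k|X\psi_{\delta_k}|^2+|Xr|^2/r^2\bigr)$ by Cauchy--Schwarz on the bounded number of active indices, shrinking $\kappa$ converts the surviving reservoir into the prescribed $\tfrac18|X\Phi|^2$. The main obstacle is precisely this final balancing: one must simultaneously absorb the nuisance terms arising from differentiating $\chi_k(r(z))$ into the two error terms $\tfrac{C}{r}|\partial\bar\partial r|$ and $\tfrac{C}{r^2}|Xr|^2$ in~(2), while retaining enough of the self-bounded-gradient sum $\sum\chi_k|X\psi_{\delta_k}|^2$ to produce $\tfrac18|X\Phi|^2$. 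Beyond this careful bookkeeping no new ingredients are needed.
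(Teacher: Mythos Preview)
Your approach is essentially the same as the paper's: the paper sets $\Phi=\sum_j\bigl(e^{\phi_{2^{-j}}}-1\bigr)\chi(2^jr)$ with a single dyadically rescaled bump $\chi$ (assumed to satisfy $\dot\chi^2/\chi$ bounded) and computes the Levi form directly so that the constant $\tfrac18$ falls out without an auxiliary scale $\kappa$. Your use of the exponent $\phi_\delta/2$ (so each $\psi_\delta$ already has self-bounded gradient before gluing), a genuine partition of unity, and the tunable $\kappa$ are cosmetic variations; just note that your cross-term absorption tacitly needs $(\chi_k')^2/\chi_k\lesssim r^{-2}$, i.e.\ exactly the $\dot\chi^2/\chi$ hypothesis the paper states explicitly.
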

{\it Proof. } Let $\chi$ be a cut-off function such that 
$\chi(t)=
\begin{cases}
0 &\T{~if~} t\le \dfrac{1}{4} ~~\T{or}~~t\ge 2.\\
1&\T{~if~}  \dfrac{1}{2}\le t\le 1.
\end{cases}$ We also suppose that $|\dot\chi|$, $|\ddot\chi|$ and $\dfrac{\dot\chi^2}{\chi}$ are bounded.  
Define 
\begin{eqnarray}\Label{a1}
\Phi(z):=\sum_{j=1}^\infty\left(\exp(\phi_{2^{-j}}(z))-1\right)\chi(2^jr(z)).
\end{eqnarray}
Denote $S_\delta:=\{z\in U| 0<r(z)<\delta\}$. Let $z\in U\setminus\bar\Om=\underset{j=1}{\overset{\infty}{\cup}} S_{2^{-k}}\setminus S_{2^{-(k+1)}}$, then there is an integer $k$ such that 
\begin{eqnarray}\Label{2.1a}
z\in S_{2^{-k}}\setminus S_{2^{-(k+1)}}=\{z\in U: 2^{-k-1}\le r(z)<2^{-k}\}.
\end{eqnarray}
We notice that $\chi(2^{j}r(z))=0$ if $j<k-1$ or $j>k+1$, and $\chi(2^kr(z))=1$ for any $z\in S_{2^{-k}}\setminus S_{2^{-(k+1)}}$. Hence, \eqref{a1} can be rewritten that 
$$\Phi(z)=\sum_{j=k-1}^{k+1}\left(\exp(\phi_{2^{-j}}(z))-1\right)\chi(2^jr(z)).$$ 
This proves (i). We observe that 
\begin{eqnarray}\Label{2.2}
\begin{split}
\di\dib\left(\left(e^{\phi_{2^{-j}}}-1\right)\chi(2^jr)\right)(X,\bar X)=
&\left(\di\dib\phi_{2^{-j}}(X,\bar X)+|X\phi_{2^{-j}}|^2\right ) e^{\phi_{2^{-j}}}\chi\\
&+2^{j+1}\T{Re}\la X\phi_{2^{-j}}, \overline{Xr}\ra e^{\phi_{2^{-j}}}\dot\chi\\
&+\left(2^j\di\dib r(X,\bar X)\dot\chi+2^{2j}|Xr|^2\ddot\chi\right)(e^{\phi_{2^{-j}}}-1)\\
\ge&\left( \di\dib\phi_{2^{-j}}(X,\bar X)+\dfrac{1}{2}|X\phi_{2^{-j}}|^2\right)e^{\phi_{2^{-j}}}\chi\\
&-2^j\dot\chi(1-e^{\phi_{2^{-j}}})|\di\dib r(X,\bar X)|\\
&-2^{2j}(|\ddot\chi|(1-e^{\phi_{2^{-j}}})+2\frac{\dot\chi^2}{\chi}e^{\phi_{2^{-j}}})|Xr|^2.
\end{split}
\end{eqnarray}
Here, we use the Cauchy-Schwartz inequality for the second line of  \eqref{2.2}, that is,
$$\left|2^{j+1}\T{Re}\la X\phi_{2^{-j}} \overline{Xr}\ra  e^{\phi_{2^{-j}}}\dot\chi\right| \le \frac{1}{2}|X\phi_{2^{-j}}|^2e^{\phi_{2^{-j}}}\chi+2^{2j+1}|Xr|^2\frac{\dot\chi^2}{\chi}.$$
Moreover, we also observe that
\begin{eqnarray}\Label{2.3}
\begin{split}
|X\Phi(z)|^2=&|\sum_{j=k-1}^{k+1}X(\phi_{2^{-j}})e^{\phi_{2^{-j}}}\chi+2^j(e^{\phi_{2^{-j}}}-1)X(r)\dot\chi(2^jr)|^2\\
\le & 4\sum_{j=k-1}^{k+1}|X\phi_{2^{-j}}|^2e^{\phi_{2^{-j}}}\chi(2^jr)+2^{2k+2}(1-e^{-1})^2|Xr|^2\left(\frac{1}{4}\dot\chi^2(2^{k-1})+4\dot\chi^2(2^{k+1})\right)
\end{split}
\end{eqnarray}
Combining \eqref{2.2} and \eqref{2.3}, we obtain
\begin{eqnarray}\Label{2.4}
\begin{split}
\di\dib\Phi(X,\bar X)\ge&e^{-1}\di\dib\phi_{2^{-k}}(X,\bar X)+\dfrac{1}{8}|X\Phi|^2-C(2^k|\di\dib r(X,\bar X)|+2^{2k}|Xr|^2)\\
\ge&cf^2(2^k)|X|^2+\dfrac{1}{8}|X\Phi|^2-C(2^k|\di\dib r(X,\bar X)|+2^{2k}|Xr|^2)\\
\end{split}
\end{eqnarray}
for any $z\in S_{2^{-k}}\setminus S_{2^{-(k+1)}}$. From \eqref{2.3}, we also obtain $|D\Phi|\lesssim 2^k$ $z\in S_{2^{-k}}\setminus S_{2^{-(k+1)}}$ since $|D\phi_\delta|\lesssim \delta^{-1}$ and $|Dr|\lesssim 1$. This completes the proof of (ii) and (iii).\\

$\hfill\Box$

{\bf Step 3.} We recall that $g(t)=\left(\displaystyle\int_t^\infty \dfrac{da}{a f(a)}\right)^{-1}=\left(\displaystyle\int_0^{t^{-1}}\dfrac{da}{a f(a^{-1})}\right)^{-1}$ for any $t>1$. Then, it is easy to check that $g$ is increasing, $g\to \infty$ at $\infty$, and $g\le f$ on $(1,+\infty)$.  We define 
$$G(\delta):=g^*\left(((\gamma\delta)^{-1}\right)^{-1}$$ where $\gamma>0$ is a constant to be chosen later. We also notice that $G$ is an increasing function, and $G(0)=0$.\\
{\it  Claim: For $\delta>0$, we have
\begin{enumerate} 
\item $\dfrac{\dot G(\delta)}{G(\delta)}=\gamma f(G^{-1}(\delta))$;
\item $G(\delta)\ddot G(\delta)\le \dot G^2(\delta)$;
\item $\dfrac{G(\delta)}{\delta}\le \dot G(\delta)$.
\end{enumerate}}
{\it Proof of the Claim.} 
 By the definition of $G$ and $g$, we have 
\begin{eqnarray}\Label{2.6}
g(G(\delta)^{-1}=(\gamma\delta)^{-1}\quad \T{ or }\quad \displaystyle\int_0^{G(\delta)}\dfrac{da}{a f(a^{-1})}=\gamma \delta.
\end{eqnarray}
Taking the derivative with respect to $\delta$ in the second equation of \eqref{2.6}, we prove the first claim, that is, 
\begin{eqnarray}\Label{2.7}
\dfrac{\dot G(\delta)}{G(\delta)}=\gamma f(G^{-1}(\delta)).
\end{eqnarray}
Taking again  the derivative with respect to $\delta$ in \eqref{2.7}, and observing that
\begin{eqnarray}\Label{2.8}
-\frac{\gamma\dot G(\delta)\dot f(G^{-1}(\delta))}{G^2(\delta)}=\dfrac{G(\delta)\ddot G(\delta)-\dot G^2(\delta)}{G^2(\delta)},
\end{eqnarray}
(since $G$ and $f$ are increasing functions) we get the proof of the second claim. Moreover, since $f\ge g$, then $G^{-1}(\delta)=g^*\left((\gamma\delta)^{-1}\right)\ge f^*\left((\gamma\delta^{-1})\right)$. From \eqref{2.7} we then get $\dfrac{\dot G(\delta)}{G(\delta)}\ge \gamma f(f^*((\gamma\delta)^{-1}))=\delta^{-1}$. The proof of the Claim is complete.\\

$\hfill\Box$

{\bf Step 4.} We define 
$$\rho(z,w)=r(z)+G(|z-w|)\left(-1+\epsilon\Phi(z)\right)$$
where $\epsilon>0$ will be chosen later. \\

Let $S_w= \{z\in U| \rho(z,w)=0\}$ be a hypersurface defined by $\rho(z,w)=0$ where $w$ is fixed. 
We will prove that $\rho$ satisfies the following properties:
\begin{enumerate}
\item[(i)] $\rho(w,w) = 0$ for any $w\in b\Om$.
\item[(ii)] $\rho(z,w)\le -G(|z-w|)$ for $z\in U\cap \Om$ and $w\in U\cap b\Om$.
\item[(iii)] $\rho(z,\pi(z)) \gtrsim - r(z)$ for $z\in U\cap \Om$, where $\pi(z)$ is the projection of $z$ to the boundary.
\item[(v)] $S_w$ is pseudoconvex.
\item[(iv)] $|D_z \rho(z,w)|\approx 1$ on $S_w$.
\end{enumerate}
Now, (i) is obvious.  
Since $\Phi$ is negative and bounded,  we first choose $\epsilon$ so small  that $-2\le -1+\epsilon\Phi\le -1$.
For $z\in U\cap \Om$, we have $r(z)<0$, and $|r(z)|\ge G(|r(z)|)$,  hence (ii) and (iii) follow.  \\

We estimate the Levi form of $\rho$ with respect to $z$,
\begin{eqnarray}\Label{phi}
\begin{split}
\di_z\dib_z\rho(z,w)(X,X)=&\di\dib r(z)(X,X)+\big(\dfrac{\dot{G}(|z-w|)}{|z-w|}+\ddot{G}(|z-w|)\big)(-1+\epsilon\Phi(z))|X|^2\\
&+2\epsilon\T{Re}\la XG(|z-w|),X\Phi(z)\ra+\epsilon G(|z-w|)\di\dib \Phi(z)(X,X)\\
\ge&\di\dib r(z)(X,X)-2\big(\dfrac{\dot{G}(|z-w|)}{|z-w|}+\ddot{G}(|z-w|)+8\epsilon\dfrac{\dot{G}^2(|z-w|)}{G(|z-w|)} \big)|X|^2\\
&+\epsilon G(|z-w|)\big(\di\dib \Phi(z)(X,X)-\frac{1}{16}|X\Phi(z)|^2\big)\\
\ge&\di\dib r(z)(X,X)-\epsilon C G(|z-w|)\left(\dfrac{1}{r}|\di\dib r(X,\bar X)|+\dfrac{1}{r^2}|Xr|^2\right)\\
&+\epsilon c G(|z-w|)f^2(\dfrac{1}{r})|X|^2+\dfrac{\epsilon }{16}G(|z-w|)|X\Phi|^2 \\
&-2\big(\dfrac{\dot{G}(|z-w|)}{|z-w|}+\ddot{G}(|z-w|)+8\epsilon\dfrac{\dot{G}^2(|z-w|)}{G(|z-w|)} \big)|X|^2\\
\end{split}
\end{eqnarray} 
Here, the first inequality follows from Cauchy-Schwartz inequality as for the second line; the last inequality follows from Lemma~\ref{A3}(ii).\\

Now we consider $z\in (S_w\cap U)\setminus{w} \subset U\setminus \bar\Om$, that is, $r(z)=G(|z-w|)\big(1-\epsilon\Phi(z)\big)$. By the choice of $\epsilon$, we obtain 
\begin{eqnarray}\Label{c2.10}
G(|z-w|)\le| r(z)|\le 2G(|z-w).
\end{eqnarray}
Thus the inequality of \eqref{phi} continues as
\begin{eqnarray}\Label{b2}
\begin{split}
\di_z\dib_z\rho(z,w)(X,X)\ge&\di\dib r(z)(X,X)-2\epsilon C|\di\dib r(X,X)|-\frac{4C\epsilon }{G(|z-w|)}|Xr|^2\\
&+\frac{\epsilon}{16}G(|z-w|)|X\Phi|^2\\
&+\left( c\epsilon G(|z-w|)f^2\left(\frac{1}{G(|z-w|}\right)-(4+16\epsilon)\dfrac{\dot{G}^2(|z-w|)}{G^2(|z-w|)} \right)|X|^2\\
\end{split}
\end{eqnarray} 
Here the last line follows from Claim (2) and (3).\\

Choose $\epsilon$ small such that $2\epsilon C\le 1$; the first line of \eqref{b2} can be estimated as follows
\begin{eqnarray}\Label{2.11}
\begin{split}
&\di\dib r(z)(X,X)-2\epsilon C |\di\dib r(X,X)|-\frac{4\epsilon C}{G(|z-w|)}|Xr|^2\\
&\ge \di\dib r(z)(X,X)-|\di\dib r(X,X)| -\frac{2}{G(|z-w|)}|Xr|^2\\
&\ge C_1|X| |Xr|-\frac{2}{G(|z-w|)}|Xr|^2\\
&\ge-\frac{C_1^2}{4}G(|z-w|)|X|^2-\frac{3}{G(|z-w|)}|Xr|^2\\
\end{split}
\end{eqnarray} 
For $X\in T^{1,0}S_w$, that is, $X\rho=0$. This implies
 $$Xr=(1-\epsilon \Phi)XG(|z-w|)-\epsilon G(|z-w|)X\Phi,$$
and hence, 
$$
|Xr|^2\le 8\dot G^2(|z-w|)|X|^2+2\epsilon^2 G^2(|z-w|)|X\Phi|^2.
$$
The inequality \eqref{2.11} continues as 
\begin{eqnarray}\Label{2.12}
\begin{split}
&\ge-C_2 \frac{\dot G^2(|z-w|)}{G(|z-w|)}|X|^2-6\epsilon^2 G(|z-w|)|X\Phi|^2.
\end{split}
\end{eqnarray} 
Combining \eqref{b2} and \eqref{2.12}, we obtain
\begin{eqnarray}\Label{2.13}
\begin{split}
\di\dib\rho(X,\bar X)\ge& \left(\dfrac{\epsilon}{16}-6\epsilon^2\right)G(|z-w|)|X\Phi|^2\\
&+c\epsilon G(|z-w|)\left(f^2\left(\frac{1}{G(|z-w|)}\right)-C_3 \frac{\dot G^2(|z-w|)}{G^2(|z-w|)}\right)|X|^2
\end{split}
\end{eqnarray} 
Again, choose $\epsilon$  such that $\dfrac{\epsilon}{16}-6\epsilon^2\ge 0$; then the term in left hand side of the first line of \eqref{2.12} can be disregarded. Using Claim (1) with $\gamma>0$  small enough, we obtain that the term in the second line is positive. We conclude that $\di\dib\rho(X,X)\ge 0$ on $S_w$ for any $X\in T^{1,0}S_w$. The proof of property (iv) is complete. \\

For any $z\in (S_w\cap U)\setminus {w}$, we have
\begin{eqnarray}
\begin{split}
\left|D\left(G(|z-w|)(-1+\epsilon\Phi(z))\right)\right|\le& 2\dot G(|z-w|)+\epsilon G(|z-w|)|D\Phi(z)|\\
\le& 2\eta G(|z-w|)f(G^{-1}(|z-w|))+\epsilon \frac{G(|z-w|)}{r(z)}\\
\le& 2\eta+\epsilon\\
\end{split}
\end{eqnarray}  
 where, the second inequality follows from Claim (1) and Lemma~\ref{A3}.(3), the third inequality follows from the hypothesis that $f(t)\le t$ and \eqref{c2.10}. Since $|Dr|\approx 1$, then for $\epsilon$ and $\eta$ small enough, we obtain $|D\rho|\approx 1$. That is the proof of property (iv).

The proof of Theorem \ref{bumpingfunct} is complete.

$\hfill\Box$

\section{Proof of Theorem~\ref{t1}}
The proof of Theorem~\ref{t1} follows immediately from Theorem~\ref{pshpeak} and Theorem~\ref{lowerbound}  below. 
 Theorem~\ref{pshpeak} consists in the  construction of plurisubharmonic peak functions with good estimates. This is a consequence of the construction of bumping functions in last section. More precisely, we obtain the following  
\begin{theorem}\Label{pshpeak}Assume that there exists a family of bumping functions on a local path $V$ of the boundary as in the conclusion of Theorem \ref{bumpingfunct}.  Fix $0< \eta<1$; then for any $w\in V\cap b\Om$ there is a plurisuhharmonic function $\psi_w$ on $V\setminus \{w\}$ verifying
 \begin{enumerate}
   \item $|\psi_w(z)-\psi_w(z')|\lesssim |z-z'|^\eta$
      \item $\psi_w(z)\le -G^\eta(|z-w|)$
   \item $\psi_{\pi(z)}(z)\gtrsim-\delta_\Om(z)^{\eta}$
 \end{enumerate}
for any $z$ and $z'$ in $V\cap\bar\Om$.
\end{theorem}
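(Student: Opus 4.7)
The natural candidate is
\[
\psi_w(z) := -\bigl(-\rho(z,w)\bigr)^\eta,
\]
where $\rho$ is the bumping function produced by Theorem~\ref{bumpingfunct}, possibly twisted by an exponential weight $e^{-K|z-w|^2}$ if needed to widen the range of admissible $\eta$. Theorem~\ref{bumpingfunct}(1)--(2) ensures that $\rho(\cdot,w)<0$ on $(V\cap\bar\Om)\setminus\{w\}$ with $\rho(w,w)=0$, so $\psi_w$ is well-defined and continuous on the set $\{\rho(\cdot,w)\le 0\}\cap V$ containing $V\cap\bar\Om$, negative off $w$ and vanishing at $w$.

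The three quantitative estimates are consequences of Theorem~\ref{bumpingfunct}(2)--(3) and elementary calculus: property (2) is obtained by raising $-\rho\geq G$ to the $\eta$-th power; property (3), from $-\rho(z,\pi(z))\lesssim\delta_\Om(z)$ similarly; property (1), from the H\"older inequality $|a^\eta-b^\eta|\leq|a-b|^\eta$ for $a,b\geq 0$ and $0<\eta<1$, together with the Lipschitz bound $|\rho(z,w)-\rho(z',w)|\lesssim|z-z'|$ uniform in $w$, coming from $\rho\in C^2$.

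The main step is plurisubharmonicity on $\{\rho<0\}\cap V$. A direct computation gives
\[
\di\dib\psi_w(X,\bar X)=\eta(-\rho)^{\eta-2}\bigl\{(1-\eta)|X\rho|^2+(-\rho)\,\di\dib\rho(X,\bar X)\bigr\},
\]
so one must show the braced quantity is non-negative. I would decompose $X\in T_z^{1,0}\C^n$ into its tangential component $X^t$ and its complex-normal multiple $\lambda N$ with respect to the level surface of $\rho$ through $z$; the estimate $|D_z\rho|\approx 1$ from Theorem~\ref{bumpingfunct}(4a) yields $|X\rho|^2\approx|\lambda|^2$, while the strong pseudoconvexity of $S_w$ off $w$ from Theorem~\ref{bumpingfunct}(4b) gives the coercive bound $\di\dib\rho(X^t,\bar X^t)\geq c(z)|X^t|^2$ for some $c(z)>0$. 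A Cauchy--Schwarz absorption of the mixed term $2\operatorname{Re}(\bar\lambda\,\di\dib\rho(X^t,\bar N))$ into the tangential quadratic then reduces the problem to the pointwise bound
\[
1-\eta\;\geq\;\frac{C'(-\rho)}{c(z)}.
\]

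I expect the main obstacle to be the degeneration of $c(z)$ as $z\to w$, which however works in our favor. Tracing through Step~4 of the proof of Theorem~\ref{bumpingfunct}, one extracts $c(z)\gtrsim G(|z-w|)\,f^2\bigl(G^{-1}(|z-w|)\bigr)$, while $-\rho(z,w)$ is controlled by a constant multiple of $G(|z-w|)$ in a collar neighborhood of $b\Om$. Hence the right-hand side of the inequality is dominated by $C''/f^2\bigl(G^{-1}(|z-w|)\bigr)$, which tends to $0$ as $z\to w$ by $f\to\infty$, so the pointwise bound is automatic near $w$. Elsewhere in $V$, where $c(z)$ and $-\rho$ are comparable to fixed constants, shrinking $V$ and/or inserting the exponential weight $e^{-K|z-w|^2}$ into the formula supplies the remaining margin: the weight contributes a positive term $K(-\rho)|X|^2$ to the Levi form of $\rho\,e^{-K|z-w|^2}$, which for $K$ large enough yields plurisubharmonicity for any prescribed $\eta<1$, perturbing the three quantitative estimates only by harmless multiplicative constants.
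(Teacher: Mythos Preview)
Your proposal is correct and follows essentially the same route as the paper: the paper defines $\psi_w(z)=-\bigl(-\rho(z,w)\,e^{L|z-w|^2}\bigr)^\eta$, cites the Diederich--Forn\ae ss argument \cite{DF79} for plurisubharmonicity on $\{\rho(\cdot,w)<0\}\cap V$, and then reads off (1)--(3) from the properties of $\rho$ in Theorem~\ref{bumpingfunct} exactly as you do. Your Levi-form sketch is precisely the content of that cited argument; note only that the sign of the exponential twist in the paper is $+L$ rather than your $-K$, which is the standard Diederich--Forn\ae ss normalization.
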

\begin{remark} The construction of the plurisubharmonic peak functions on a pseudoconvex domain of finite type in $\C^2$ and a convex domain of finite type in $\C^n$ has been obtained by J. E. Fornaess and N. Sibony in \cite{FS89}.
\end{remark}
{\it Proof of Theorem~\ref{pshpeak}.} Using the argument in Section 3 and 4 of Diederich-Fornaess \cite{DF79}, we obtain that for any $\eta>0$, there exist an open neighborhood $V\subset U$, and a constant $L>0$, such that 
$$\psi_w(z)=-\left(-\rho(z,w) e^{L|z-w|^2}\right)^{\eta},$$
is a plurisubharmonic function on $V\cap \{z\in U:\rho(z,w)<0\}$. By the properties of $\rho$, we can check that $\psi_w$ satisfies (1), (2) and (3). That is the proof of Theorem~\ref{pshpeak}.\\
  
$\hfill\Box$

Now, we prove the lower bound for the Kobayashi metric  by using the plurisubharmonic peak function. We state the theorem in a more general setting

\begin{theorem}\Label{lowerbound}
Let $\Om$ be a pseudoconvex domain in $\C^n$, $z_o$ be a given boundary  point, $F_1$ and $F_2$ are postive functions such that $F_1$ is increasing and convex. Assume that there is a neighborhood $V$ of $z_o$ such that for each $w\in V\cap b\Om$, there is a plurisubharmonic function $\psi_w$ such that 
\begin{enumerate}
  \item[i)] $\psi_w(z)\le -F_1(|z-w|)$ 
    \item[ii)] $\psi_{\pi(z)}(z)\ge -F_2(\delta_\Om(z))$ 
  \end{enumerate}
for $z\in U\cap\Om$. \\
Then $$K_\Om(z,X)\ge (F_1^*(F_2(\delta_\Om(z)))^{-1}|X|$$ for all $z\in V\cap \Om$, $X\in T_z^{1,0}{\C^n}$.
\end{theorem}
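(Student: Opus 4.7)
The plan is to test the Kobayashi metric against an arbitrary holomorphic disc and use the peak function $\psi_{\pi(z)}$ to convert the interior lower bound (ii) and the ``far-field'' upper bound (i) into an estimate on the size of the disc. Fix $z \in V \cap \Om$ close enough to $z_o$ that $\pi(z) \in V \cap b\Om$, and let $g\colon \Delta_r \to \Om$ be any holomorphic map with $g(0)=z$ and $g'(0)=X$; after a routine localization (restricting $g$ to a subdisc so that its image lies in the domain of $\psi_{\pi(z)}$, absorbing the resulting constant into $\simge$), the composition $u := \psi_{\pi(z)} \circ g$ is a nonpositive subharmonic function on $\Delta_r$. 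Hypotheses (i) and (ii) translate into
$$|g(\zeta) - \pi(z)| \le F_1^*\bigl(-u(\zeta)\bigr) \quad (\zeta \in \Delta_r), \qquad -u(0) \le F_2(\delta_\Om(z)).$$

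Since $F_1$ is increasing and convex, its inverse $F_1^*$ is increasing and concave wherever it is defined. For $0 < \rho < r$ I integrate the pointwise bound over the circle $|\zeta| = \rho$, apply Jensen's inequality in its concave form to $F_1^*$, and then invoke the sub-mean-value inequality for the subharmonic $u$, obtaining
$$\frac{1}{2\pi}\int_0^{2\pi} |g(\rho e^{i\theta}) - \pi(z)|\, d\theta \le F_1^*\!\left(\frac{1}{2\pi}\int_0^{2\pi}\!-u(\rho e^{i\theta})\, d\theta\right) \le F_1^*\bigl(-u(0)\bigr) \le F_1^*\bigl(F_2(\delta_\Om(z))\bigr).$$

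To pass from this circle average to a bound on $X = g'(0)$, I apply the Cauchy integral formula to the holomorphic function $\zeta \mapsto g(\zeta) - \pi(z)$ (the constant $\pi(z)$ does not affect the derivative):
$$|X| = \left|\frac{1}{2\pi i}\oint_{|\zeta|=\rho}\frac{g(\zeta)-\pi(z)}{\zeta^{2}}\, d\zeta\right| \le \frac{1}{\rho}\cdot\frac{1}{2\pi}\int_0^{2\pi}|g(\rho e^{i\theta}) - \pi(z)|\, d\theta \le \frac{F_1^*(F_2(\delta_\Om(z)))}{\rho}.$$
Letting $\rho \uparrow r$ yields $r^{-1} \ge |X|/F_1^*(F_2(\delta_\Om(z)))$, and taking the infimum over all admissible discs gives $K_\Om(z,X) \ge (F_1^*(F_2(\delta_\Om(z))))^{-1}|X|$.

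The only delicate point I anticipate is the localization step: one must ensure that $g(\Delta_r)$ actually lies in the domain where $\psi_{\pi(z)}$ satisfies (i) and (ii) and remains nonpositive. This is automatic when $z$ is close to $z_o$ and $r$ is small; in general it is handled by passing to a subdisc and checking that the extracted factor is a harmless constant. Convexity of $F_1$ plays a role only through the concavity of $F_1^*$ needed in the Jensen step, while the rest is a routine combination of subharmonicity and Cauchy's estimate.
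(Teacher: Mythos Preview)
Your argument is correct and follows essentially the same route as the paper: the sub-mean-value inequality for $\psi_{\pi(z)}\circ g$, Jensen's inequality, and Cauchy's estimate for $g'(0)$. The only cosmetic differences are that you apply Jensen to the concave $F_1^*$ rather than to the convex $F_1$, and you bound the full vector $g(\zeta)-\pi(z)$ directly rather than componentwise as the paper does (which in fact spares you a stray dimensional constant in the final inequality).
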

\begin{proof}
 We  fix now a point $z\in V\cap \Om$, put $w=\pi(z)$ and assume that $g=(g_1,\dots,g_n):\overline\Delta\to \Om$ is a holomorphic map of the closed unit disc into $\Om$ with $g(0)=z$.\\

By applying the mean value inequality to the subharmonic function $\psi_w(g(t))$ on $\overline\Delta$ we get
$$\psi_w(z)=\psi_w(g(0))\le \int_{0}^{1}\psi_w\circ g(e^{i2\pi\theta})d\theta$$
The hypothesis (ii) gives
\begin{eqnarray}\Label{ineq1}
 F_2(\delta(z)))\ge \int_{0}^1-\psi_w\circ g(e^{i2\pi\theta})d\theta 
\end{eqnarray}
We now use the hypothesis (i) of $\psi_w$, 
\begin{eqnarray}\Label{ineq2}
\begin{split}
\int^1_0-\psi_w\circ f(e^{i2\pi \theta})d\theta=&\int^1_0\big(-\psi_w\circ g(e^{i2\pi \theta})-F_1(|g_j(e^{i2\pi\theta})-w_j|)\big)d\theta\\
&+\int^1_0F_1\big(|g_j(e^{i2\pi\theta})-w_j|\big)d\theta\\
\ge &\int^1_0F_1\big(|g_j(e^{i2\pi\theta})-w_j|\big)d\theta.
\end{split}
\end{eqnarray}
Using the Jensen inequality for the increasing, convex function $F_1$, we get 
$$F_1( |g_j'(0)|)\le F_1 \Big(\int^1_0|g_j(e^{i2\pi\theta})-w_j|d\theta\Big)\le \int^1_0 F_1\big(|g_j(e^{i2\pi\theta})-w_j|\big)d\theta .$$
Combining the above inequality with \eqref{ineq1} and \eqref{ineq2}, we obtain
$$F_1(|g'_j(0)|)\le  F_2(\delta_\Om(z)).$$
An immediate consequence of this is 
$$|g'_j(0)|\le  F_1^*(F_2(\delta_\Om(z)).$$
By the definition of $K(z,X)$, we must have for all $X
\in T^{1,0}\C^n$
$$K(z,X)\ge  (F_1^*(F_2(\delta_\Om(z)))^{-1}|X|.$$

\end{proof}

\section{Application to proper holomorphic maps }
Let $\Om_1$ and $\Om_2$ be bounded domains in $\C^n$ with smooth boundary. Assume that $\Om_2$ is pseudoconvex of finite type at every boundary point. It is well-known that there is $\alpha>0$, such that every proper holomorphic map $\Psi: \Om_1\to \Om_2$ is H\"{o}lder continuous of order $\alpha$, in particular, $\Psi$ extends continuosly to $\bar\Om_1$. In this section, we prove a similar  result for domains of infinite type. For this purpose we give a suitable estimate for generalized  H\"older regularity.\\

Let $f$ be an increasing function such that $\underset{t\to+\infty}{\lim} f(t)=+\infty$. For $\Om\subset \C^n$, define the $f$-H\"older space on $\bar\Om$ by
$$\Lambda^{f}(\bar\Om)=\{u : \no{u}_{\infty}+\sup_{z,z+h\in \overline\Om}f(|h|^{-1}) \cdot |u(z+h)-u(z)|<\infty \}$$ 
and set 
$$\no{u}_{f}= \no{u}_{\infty}+\sup_{z,z+h\in \overline\Om}f(|h|^{-1})\cdot |u(z+h)-u(z)|. $$
Note that the $f$-H\"older space include the standard H\"older space $\Lambda_\alpha(\bar\Om)$ by taking $f(t) = t^{\alpha}$ (so that $f(|h|^{-1}) = |h|^{-\alpha}$) with $0<\alpha<1$. \\

Before proving Theorem~\ref{fHolder}, we need a generalization of the  Hardy-Littlewood Lemma. 
\begin{lemma} \Label{HL}Let $\Om$ be a bounded Lipschitz domain in $\R^N$ and let $\delta_{\Om}(x)$ denote the distance function from $x$ to the boundary of $\Om$. Let $G:\R^+\to\R^+$ be an increasing function such that $\dfrac{G(\delta)}{\delta}$ is decreasing and $\displaystyle\int_0^d\frac{G(\delta)}{\delta}d\delta<\infty$ for $d>0$ small enough.  Let $u\in C^1(\Om)$ satisfy
\begin{eqnarray}\Label{5.1}
\quad |\nabla u(x)|\lesssim \frac{G(\delta_{\Om}(x))}{\delta_{\Om}(x)}~~\T{~~for every ~~} x\in \Om.
\end{eqnarray}
Then $u\in \Lambda^f(\bar\Om)$  where $f(d^{-1})=\Big(\displaystyle\int_0^d\frac{G(\delta)}{\delta}d\delta\Big)^{-1}.$
\end{lemma}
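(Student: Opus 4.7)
\medskip

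\textbf{Proof proposal.} The plan is a generalization of the classical Hardy--Littlewood argument, with $|h|^\alpha$ replaced by the modulus $\omega(d) := \displaystyle\int_0^d \frac{G(\delta)}{\delta}\, d\delta = f(d^{-1})^{-1}$. I will first record two elementary properties of $\omega$ used throughout: since $G(\delta)/\delta$ is decreasing, one has $\omega(d) \ge d \cdot G(d)/d = G(d)$, and moreover $\omega(\lambda d) \le \lambda \omega(d)$ for any $\lambda \ge 1$ (splitting the integral at $d$ and using monotonicity of $G/\delta$ on the second piece). These will allow me to absorb comparable constants.

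The core step is the interior estimate: if $x, y \in \Om$ with $|x - y| = r$ and both $\delta_\Om(x), \delta_\Om(y) \ge c_0 r$ for some constant $c_0 > 0$, then the segment $[x, y]$ stays at distance $\simge r$ from $b\Om$, so that
\[
|u(x) - u(y)| \le \int_0^r |\nabla u(x + t\,(y-x)/r)|\, dt \lesssim \frac{G(c_0 r)}{c_0 r}\cdot r \lesssim G(r) \le \omega(r).
\]
The second step is the transverse estimate: if $x \in \Om$ is close to the boundary, I want to travel from $x$ to an interior point $\tilde x$ with $\delta_\Om(\tilde x) \approx r$, over a path of length $\approx r$. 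Here the Lipschitz hypothesis on $\Om$ enters: it lets me choose, in a uniform way, a cone direction $\nu$ at a near boundary point so that the ray $x + s\nu$ has $\delta_\Om(x + s\nu) \simge s$ for $0 \le s \lesssim r$. Integrating the gradient bound along this ray gives
\[
|u(x + s\nu) - u(x + s'\nu)| \lesssim \int_{s'}^{s} \frac{G(\tau)}{\tau}\, d\tau,
\]
which is Cauchy as $s' \to 0$ by the assumed integrability, so $u$ extends continuously to $\bar\Om$ with $|u(x) - u(\pi(x))| \lesssim \omega(\delta_\Om(x))$ for $x$ near $b\Om$.

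With these two ingredients, I conclude the $\Lambda^f$-bound as follows. Fix $x, y \in \bar\Om$ and set $r = |x - y|$. If both $\delta_\Om(x), \delta_\Om(y) \ge r$ the interior estimate yields $|u(x) - u(y)| \lesssim \omega(r)$ directly. Otherwise, using the Lipschitz geometry choose interior approximants $\tilde x, \tilde y$ with $\delta_\Om(\tilde x), \delta_\Om(\tilde y) \approx r$, $|\tilde x - x| \lesssim r$, $|\tilde y - y| \lesssim r$ and $|\tilde x - \tilde y| \lesssim r$. The transverse estimate gives $|u(x) - u(\tilde x)|, |u(y) - u(\tilde y)| \lesssim \omega(r)$, and the interior estimate gives $|u(\tilde x) - u(\tilde y)| \lesssim \omega(|\tilde x - \tilde y|) \lesssim \omega(r)$ by the dilation property above. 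Summing the three contributions,
\[
|u(x) - u(y)| \lesssim \omega(r) = \frac{1}{f(|x - y|^{-1})},
\]
which is exactly the statement $u \in \Lambda^f(\bar\Om)$.

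\medskip

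The main obstacle is the second step: making the choice of $\tilde x$ (and of the approximating ray) depend in a controlled, uniform way on $x$ across a Lipschitz boundary. Locally this can be carried out in a Lipschitz graph coordinate by selecting a fixed direction inside the opening of the inner cone; globally one patches via a finite covering, and the constants depend only on the Lipschitz character of $b\Om$. The remaining verifications --- the two monotonicity properties of $\omega$, and the Cauchy argument showing $u$ extends to $\bar\Om$ --- are straightforward consequences of the hypotheses on $G$.
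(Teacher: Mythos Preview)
The paper does not actually prove this lemma; it simply refers the reader to \cite{Kha12}. Your sketch is a correct adaptation of the classical Hardy--Littlewood argument: the transverse estimate along an inner-cone ray uses the integrability of $G(\delta)/\delta$, the interior estimate along a segment uses that $G(\delta)/\delta$ is decreasing, and the three-segment decomposition together with the sub-additivity $\omega(\lambda d)\le \lambda\,\omega(d)$ assembles the two. One small point worth tightening: in the interior step the conclusion ``the segment $[x,y]$ stays at distance $\simge r$ from $b\Om$'' fails for arbitrary $c_0>0$; you need $c_0\ge 1$ (or, when applying it to $\tilde x,\tilde y$, you need to lift far enough into the cone so that $\delta_\Om(\tilde x),\delta_\Om(\tilde y)$ dominate $|\tilde x-\tilde y|$), which is easily arranged using the Lipschitz cone condition.
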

The proof of this theorem can be found in \cite{Kha12}. 
\begin{remark} If $G(t)=t^\alpha$, Lemma \ref{HL} is the classical Hardy-Littlewood Lemma for a domain of finite type. 
\end{remark}
{\it Proof of Theorem~\ref{fHolder}}  Using Theorem~\ref{t1} for $\Om'$, the Schwarz-Pick lemma for the Kobayashi metric, and the upper bound of the Kobayashi metric, we obtain the  following estimate
\begin{eqnarray}
g\left(\delta_{\Om'}^{-1}(\Psi(z))\right)|\Psi'(z)X|\lesssim K_{\Om'}(\Psi(z), \Psi'(z)X)\le K_\Om(z,X)\lesssim \delta^{-1}_\Om(z)|X|
\end{eqnarray}
for any $z\in\Om$ and $X\in T^{1,0}\C^n$.
Moreover, by the fact that $-(-r)^\eta$ is strictly plurisubharmonic on $\Om$, one has $\delta_{\Om'}(\Psi(z))\lesssim \delta^\eta_\Om(z)$ for any $z\in\Om$  (Lemma 8 in \cite{DF79}). Therefore, 
$$|\Psi'(z)X|\lesssim \delta^{-1}_\Om(z) g^{-1}(\delta_\Om^{-\eta}(z))|X|$$
for any $z\in\Om$ and $X\in T^{1,0}\C^n$. Using  Lemma~\ref{HL}, $\Psi$ can be extended to a $h$-H\"older continuous map $\hat\Psi:\bar\Om\to\bar\Om'$ with the rate $h(t)$ defined by
\begin{eqnarray}
\begin{split}
(h(t))^{-1}:=&\int_0^{t^{-1}}\frac{d\delta}{\delta g(\delta^{-\eta})}=\frac{1}{\eta}\int_{t^\eta}^{\infty}\frac{db}{b g(b)}\\
=&\frac{1}{\eta}\int_{t\eta}^\infty\frac{1}{b} \left(\int_{b}^\infty \frac{da}{a f(a)}\right)db=\frac{1}{\eta}\int_{t^\eta}^\infty\frac{1}{af(a)} \left(\int_{t^\eta}^a \frac{db}{b}\right)da\\
=&\frac{1}{\eta}\int_{t^\eta}^\infty\frac{\ln a-\ln t^{\eta}}{af(a)}da=\frac{1}{\eta}(\tilde f(t^\eta))^{-1}.\\
\end{split}
\end{eqnarray}
 The proof of Theorem~\ref{fHolder} is complete.

$\hfill\Box$
\bibliographystyle{alpha}

\end{document}